%
%
%
%
\documentclass[fleqn]{amsart}
\usepackage{mathrsfs}

\newtheorem{theorem}{Theorem}[section]
\newtheorem{lemma}[theorem]{Lemma}
\newtheorem{prop}[theorem]{Proposition}
\newtheorem{cor}[theorem]{Corollary}
\theoremstyle{definition}

\theoremstyle{remark}

\numberwithin{equation}{section}



\begin{document}

\title{general solutions to equation $axb^*-bx^*a^*=c$ in rings with involution}

\author{Chao You}
\address{Department of Mathematics $\&$ The Academy of Fundamental and Interdisciplinary Science\\Harbin Institute of Technology\\
Harbin 150001, Heilongjiang, People's Republic of China}
\email{hityou1982@gmail.com}

\author{Changhui Wang}
\address{Department of Mathematics\\Harbin Institute of Technology\\
Harbin 150001, Heilongjiang, People's Republic of China}
\email{shuzihero@gmail.com}

\author{Yicheng Jiang}
\address{Department of Mathematics\\Qiqihar University\\Qiqihar, 161006, Heilongjiang, People's Republic of China}
\email{hitjyc@gmail.com}

\subjclass[2000]{Primary 16W10, 15A06, 15A09; Secondary 46L08}


\dedicatory{This paper is dedicated to Prof. Lixin Xuan.}

\keywords{Ring with involution, Moore-Penrose inverse, Equation in a
ring, General solution, Matrix equation, Operator equation, Hilbert
$C^*$-module}

\begin{abstract}
In [Q. Xu et al., The solutions to some operator equations, Linear
Algebra Appl.(2008), doi:10.1016/j.laa.2008.05.034], Xu et al.
provided the necessary and sufficient conditions for the existence
of a solution to the equation $AXB^*-BX^*A^*=C$ in the general
setting of the adjointable operators between Hilbert $C^*$-modules.
Based on the generalized inverses, they also obtained the general
expression of the solution in the solvable case. In this paper, we
generalize their work in the more general setting of ring
$\mathscr{R}$ with involution $*$ and reobtain results for
rectangular matrices and operators between Hilbert $C^*$-modules by
embedding the ``rectangles'' into rings of square matrices or rings
of operators acting on the same space.
\end{abstract}

\maketitle

\section*{Introdution}
Let $R(A)$ be the range of a matrix or an operator. The equation
$AXB^*-BX^*A^*=C$ was studied by Yuan \cite{yuan} and Xu et al.
\cite{xu}, for finite matrices and adjointable operators between
Hilbert $C^*$-modules respectively, under the condition that
$R(B)\subseteq R(A)$. When $A$ equals an identity matrix or identity
operator, this equation reduces to $XB^*-BX^*=C$, which was studied
by Braden \cite{braden} for finite matrices, and Djordjevi\'{c}
\cite{djordjevic} for the Hilbert space operators.

In this paper, we turn our attention to the equations
$axb^*-bx^*a^*=c$ where $a,b,c$ and $x$ are elements of a ring
$\mathscr{R}$ with involution. This point of view emphasizes the
purely algebraic nature of the problem without regard to specific
properties of matrices or bounded linear operators, and reveals the
intrinsic simplicity of the solutions. Thus the equations are
studied in a greater generality and in a transparent environment.

A novel feature of our paper is that the results for finite
rectangular matrices and adjointable operators between Hilbert
$C^*$-modules are derived from theorems for rings with involution
using the method of embedding decribed in Section \ref{embedding}.

Having established preliminary settings in Section
\ref{preliminary}, we study the equation $AXB^*-BX^*A^*=C$ in
Section \ref{generalization} in the setting of rings with
involution, giving necessary and sufficient conditions for the
existence, and the general form of these solutions based on
Moore-Penrose inverses.

Section \ref{embedding} is concerned with the extensions of the
preceding results to finite rectangular matrices with entries in a
ring with involution, and to adjointable operators between Hilbert
$C^*$-modules. This is achieved by embedding the rectangular
matrices as ¡®blocks¡¯ into the ring of square matrices of the same
order, and by embedding ¡®rectangular¡¯ operators via operator
matrices into the ring of operators acting on the same space. This
shows that our work is genuinely a generalization of previous
results, and a progress in the theory of equations in this type.

\section{Preliminaries}\label{preliminary}
Throughout this paper, ring $\mathscr{R}$ will always mean an
involution ring with a unit $1\neq 0$ such that $2$ is invertible in
$\mathscr{R}$. An involution $*$ is a unary operation $a\mapsto a^*$
on $\mathscr{R}$ preserved by the addition ($(a+b)^*=a^*+b^*$),
reversed by the multiplication ($(ab)^*=b^*a^*$) and satisfying
$(a^*)^*=a$ and $1^*=1$.

If $\mathscr{R}$ is a ring with involution and $a \in \mathscr{R}$,
we say that $b \in \mathscr{R}$ is a \emph{Moore-Penrose inverse} of
$a$, or \emph{MP-inverse} for short, if it satisfies the
\emph{Penrose equations} \cite{penrose}:
\begin{equation}\label{penrose}
aba=a, \quad bab=b, \quad (ab)^*=ab, \quad (ba)^*=ba.
\end{equation}

\begin{prop}\label{uniqueness}
Let $\mathscr{R}$ be a ring with involution and $a \in \mathscr{R}$.
If $b_1,b_2\in \mathscr{R}$ are both MP-inverses of $a$, then
$b_1=b_2$.
\end{prop}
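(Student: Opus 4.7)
The plan is to derive $b_1 = b_2$ by establishing the intermediate equality $b_1 = b_1 a b_2 = b_2$. Both halves of this chain follow by symmetric manipulations using the four Penrose equations, with the key trick being to insert $a = a b_j a$ (equation 1 applied to the ``other'' MP-inverse) inside a string and then to exchange adjoints via equations 3 and 4.

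Concretely, for the first equality I would start from $b_1 = b_1 a b_1$ (equation 2), apply equation 3 to rewrite $a b_1 = (a b_1)^* = b_1^* a^*$, and then substitute $a^* = (a b_2 a)^* = a^* b_2^* a^*$ using equation 1 for $b_2$. After regrouping the factors as $b_1 (a b_1)^* (a b_2)^*$, equation 3 for both $b_1$ and $b_2$ strips the adjoints and yields $b_1 a b_1 a b_2$, which collapses to $b_1 a b_2$ by equation 2 for $b_1$. For the second equality, the same argument with equations 1 and 4 (the ``right-sided'' versions) swaps the roles of $b_1$ and $b_2$: begin with $b_2 = b_2 a b_2$, rewrite $b_2 a = (b_2 a)^* = a^* b_2^*$, insert $a^* = (a b_1 a)^*$, and reassemble to reach $b_1 a b_2 a b_2 = b_1 a b_2$.

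There is no real obstacle here beyond bookkeeping; the only thing to be careful about is that each adjoint-removal step applies equation 3 or 4 to the correct one of $b_1, b_2$, so the calculation is genuinely symmetric rather than using any property specific to one of them. Since $\mathscr{R}$ is only assumed to be a ring with involution, I will rely exclusively on associativity, the reversal rule $(xy)^* = y^* x^*$, and the Penrose equations themselves; no cancellation, invertibility, or the hypothesis that $2$ is invertible is needed.
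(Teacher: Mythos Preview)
Your argument is correct and is essentially the same as the paper's: both proofs work by inserting $a = a b_j a$ and using the self-adjointness of $ab_j$ and $b_j a$ to pass between $b_1$ and $b_2$. The only cosmetic difference is that the paper first records the intermediate identities $b_1 a = b_2 a$ and $a b_1 = a b_2$ and then concludes via $b_1 = b_1 a b_1 = b_2 a b_1 = b_2 a b_2 = b_2$, whereas you head straight for the common value $b_1 a b_2$.
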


\begin{proof}
$b_1a=b_1ab_2a=a^*b_1^*a^*b_2^*=a^*b_2^*=b_2a$. Similarly,
$ab_1=ab_2$. Hence $b_1=b_1ab_1=b_2ab_1=b_2ab_2=b_2$.
\end{proof}

From Proposition \ref{uniqueness}, we know that the MP-inverse of
$a$ is unique if it exists, and is denoted by $a^{\dag}$. If the
MP-inverse $a^{\dag}$ of $a$ exists, we say that $a$ is
\emph{Moore-Penrose invertible}, or \emph{MP-invertible} for short.

\begin{prop}
$2^{-1}$, the inverse of $2$ in $\mathscr{R}$, commutes with every
element in $\mathscr{R}$.
\end{prop}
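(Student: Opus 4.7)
The plan is to observe that $2=1+1$ is manifestly central in $\mathscr{R}$, and then to pass from centrality of $2$ to centrality of its inverse via a standard one-line manipulation.

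First I would note that for any $a\in\mathscr{R}$ we have
\[
2a=(1+1)a=a+a=a(1+1)=a2,
\]
using only the distributive law and the fact that $1$ is a two-sided identity. Thus $2$ lies in the center of $\mathscr{R}$.

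Next, to transfer this to $2^{-1}$, I would start from the identity $2a=a2$ and multiply on both sides by $2^{-1}$, once on the left and once on the right:
\[
2^{-1}(2a)2^{-1}=2^{-1}(a2)2^{-1}.
\]
Simplifying using $2^{-1}\cdot 2=1=2\cdot 2^{-1}$ collapses the left side to $a\,2^{-1}$ and the right side to $2^{-1}a$, giving the claim.

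There is no real obstacle here; the only thing to be careful about is that the hypothesis on $\mathscr{R}$ (stated in the opening of Section~\ref{preliminary}) guarantees both that $1\neq 0$ and that $2$ is invertible, so the multiplication by $2^{-1}$ is legitimate. The argument is in fact the standard fact that the inverse of a central invertible element is again central, specialized to the element $2$.
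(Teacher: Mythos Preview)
Your proof is correct and is essentially identical to the paper's own argument: the paper also first shows $2r=(1+1)r=r+r=r(1+1)=r2$ and then multiplies this identity by $2^{-1}$ on both the left and the right to obtain $r\,2^{-1}=2^{-1}r$.
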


\begin{proof}
For any $r\in \mathscr{R}$, $2r=(1+1)r=r+r=r(1+1)=r2$. Multiplying
$2^{-1}$ from both left and right sides of the equality above, we
get $r2^{-1}=2^{-1}r$ as desired.
\end{proof}

Since, moreover, it can be easily checked that $2^{-1}+2^{-1}=1$ in
$\mathscr{R}$, we can see that $2^{-1}$ functions just as number
``$\frac{1}{2}$'' in the calculations within $\mathscr{R}$. Hence,
without any confusion, we will denote $2^{-1}$ by $\frac{1}{2}$
throughout this paper.

More notations are needed. In this paper, $E_a$, $F_a$,
$H^{(+,*)}(a)$ and $H^{(-,*)}(a)$ are reversed to denote
$1-aa^{\dag}$, $1-a^{\dag}a$, $a+a^*$ and $a-a^*$, respectively.
Please keep in mind that $E_a$, $F_a$ are projections, and
$(H^{(+,*)}(a))^*=H^{(+,*)}(a)$, $(H^{(-,*)}(a))^*=-H^{(-,*)}(a)$,
which will be very useful in the calculations later.

\section{General solutions to the equation $axb^*-bx^*a^*=c$ in
the setting of rings with involution}\label{generalization}

In this section, we will study the general solutions to
Eq.(\ref{main2}) below in the general setting of rings with
involution.

\begin{lemma}
If $a,b$ are MP-invertible elements in $\mathscr{R}$ such that
$aa^{\dag}b=b$ and $(a^{\dag}bb^{\dag}a)^*=a^{\dag}bb^{\dag}a$, then
$d=E_ba$ is also MP-invertible with the unique MP-inverse
$d^{\dag}=a^{\dag}E_b$.
\end{lemma}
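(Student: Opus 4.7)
The plan is to verify directly that the proposed element $a^\dagger E_b$ satisfies the four Penrose equations with $d = E_b a$; uniqueness of $d^\dagger$ then follows from Proposition \ref{uniqueness}.

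Two elementary consequences of the hypotheses do most of the work. First, $E_b$ is a self-adjoint idempotent with $E_b b = 0$ and hence $E_b bb^\dagger = 0$. Second, multiplying $aa^\dagger b = b$ on the right by $b^\dagger$ yields $aa^\dagger bb^\dagger = bb^\dagger$, and applying the involution (using $(aa^\dagger)^* = aa^\dagger$ and $(bb^\dagger)^* = bb^\dagger$) then gives $bb^\dagger aa^\dagger = bb^\dagger$. These identities are the only algebraic tools needed beyond the Penrose equations for $a$ and $b$.

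Given these, I would proceed as follows. First, expand $dd^\dagger = E_b aa^\dagger E_b$ and apply $bb^\dagger aa^\dagger = bb^\dagger$ together with $E_b^2 = E_b$ to collapse it to $aa^\dagger - bb^\dagger$; from this the equation $(dd^\dagger)^* = dd^\dagger$ is immediate. Next, expand $d^\dagger d = a^\dagger E_b a = a^\dagger a - a^\dagger bb^\dagger a$; the hypothesis $(a^\dagger bb^\dagger a)^* = a^\dagger bb^\dagger a$ together with the self-adjointness of $a^\dagger a$ yields $(d^\dagger d)^* = d^\dagger d$. For $dd^\dagger d = d$, multiply the simplified $dd^\dagger = aa^\dagger - bb^\dagger$ by $E_b a$ on the right and use $aa^\dagger a = a$ together with $aa^\dagger bb^\dagger = bb^\dagger$ to obtain $a - bb^\dagger a = E_b a$. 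For $d^\dagger d d^\dagger = d^\dagger$, multiply $dd^\dagger$ by $a^\dagger E_b$ on the left and use $a^\dagger a a^\dagger = a^\dagger$ and the corollary $a^\dagger bb^\dagger aa^\dagger = a^\dagger bb^\dagger$ to recover $a^\dagger - a^\dagger bb^\dagger = a^\dagger E_b$.

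The main (and rather mild) obstacle is noticing that, despite its asymmetric appearance, $E_b aa^\dagger E_b$ collapses to the symmetric expression $aa^\dagger - bb^\dagger$; this simplification underlies both $dd^\dagger d = d$ and $d^\dagger d d^\dagger = d^\dagger$. Note also the neat division of labour: the hypothesis $aa^\dagger b = b$ drives three of the four verifications, while the self-adjointness hypothesis $(a^\dagger bb^\dagger a)^* = a^\dagger bb^\dagger a$ is needed only to obtain $(d^\dagger d)^* = d^\dagger d$.
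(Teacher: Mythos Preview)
Your proposal is correct and follows essentially the same route as the paper: a direct verification of the four Penrose equations using the identities $aa^\dagger bb^\dagger = bb^\dagger$ and $bb^\dagger aa^\dagger = bb^\dagger$ derived from $aa^\dagger b = b$. The one cosmetic difference is that you first collapse $dd^\dagger = E_b aa^\dagger E_b$ to the closed form $aa^\dagger - bb^\dagger$ and reuse it, whereas the paper keeps $dd^\dagger$ in the form $E_b(aa^\dagger)E_b$ (so that self-adjointness is immediate from the symmetric shape) and carries out the $dd^\dagger d$ and $d^\dagger d d^\dagger$ computations by expanding $E_b$ step by step; the underlying identities and the division of labour between the two hypotheses are identical.
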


\begin{proof}
Check the conditions of (\ref{penrose}):
\begin{align*}
(E_ba)(a^{\dag}E_b)(E_ba)&=E_baa^{\dag}E_ba=E_baa^{\dag}(1-bb^{\dag})a
=E_b(aa^{\dag}a-aa^{\dag}bb^{\dag}a)\\
&=E_b(a-bb^{\dag}a)=E_b(1-bb^{\dag})a=E_bE_ba=E_ba.
\end{align*}
Since $aa^{\dag}b=b$, then $aa^{\dag}bb^{\dag}=bb^{\dag}$, and
$bb^{\dag}aa^{\dag}=bb^{\dag}$.
\begin{align*}
(a^{\dag}E_b)(E_ba)(a^{\dag}E_b)&=a^{\dag}E_baa^{\dag}E_b
=a^{\dag}(1-bb^{\dag})aa^{\dag}E_b=(a^{\dag}aa^{\dag}-a^{\dag}bb^{\dag}aa^{\dag})E_b\\
&=(a^{\dag}-a^{\dag}bb^{\dag})E_b=a^{\dag}(1-bb^{\dag})E_b=a^{\dag}E_bE_b=a^{\dag}E_b.
\end{align*}
\begin{equation*}
((E_ba)(a^{\dag}E_b))^*=(E_b(aa^{\dag})E_b)^*=E_b(aa^{\dag})E_b=(E_ba)(a^{\dag}E_b).
\end{equation*}
\begin{equation*}
((a^{\dag}E_b)(E_ba))^*=(a^{\dag}E_ba)^*=(a^{\dag}a-a^{\dag}bb^{\dag}a)^*=a^{\dag}a-a^{\dag}bb^{\dag}a=(a^{\dag}E_b)(E_ba).
\end{equation*}
Thus $d$ is MP-invertible and $d^{\dag}=a^{\dag}E_b$.
\end{proof}

\begin{theorem}
Let $a,b$ be MP-invertible elements in $\mathscr{R}$ such that
$aa^{\dag}b=b$ and $(a^{\dag}bb^{\dag}a)^*=a^{\dag}bb^{\dag}a$, and
$d=E_ba$. Then the general solution $x\in \mathscr{R}$ to the
equation
\begin{equation}\label{main1}
axb^*-bx^*a^*=0
\end{equation}
is of the form
\begin{equation}
x=v-\frac{1}{2}a^{\dag}avb^{\dag}b+\frac{1}{2}a^{\dag}bv^*a^*(b^{\dag})^*
-\frac{1}{2}a^{\dag}bv^*(b^{\dag}ad^{\dag}a)^*-\frac{1}{2}d^{\dag}avb^{\dag}b,
\end{equation}
where $v\in \mathscr{R}$ is arbitrary.
\end{theorem}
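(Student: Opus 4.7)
The plan is to prove the statement in two directions. Denote the right-hand side of the claimed formula by $X(v)$; since (\ref{main1}) is equivalent to $aX(v)b^*$ being self-adjoint, I would recast both directions accordingly.

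For sufficiency, given arbitrary $v\in\mathscr{R}$, I would expand $aX(v)b^*$ term by term using the identities $aa^{\dag}a=a$, $b^{\dag}bb^*=b^*$ (which follows from $(b^{\dag}b)^*=b^{\dag}b$ and $bb^{\dag}b=b$), $aa^{\dag}b=b$, and $ad^{\dag}a=a-bb^{\dag}a$ (obtained from $d^{\dag}=a^{\dag}E_b$ together with $aa^{\dag}bb^{\dag}=bb^{\dag}$). These collapse $aX(v)b^*$ to
\[
\tfrac{1}{2}bb^{\dag}avb^* \;+\; \tfrac{1}{2}bv^*a^{\dag}bb^{\dag}aa^*bb^{\dag}.
\]
To check this is self-adjoint, I would invoke the hypothesis $(a^{\dag}bb^{\dag}a)^*=a^{\dag}bb^{\dag}a$, rewritten as $a^*bb^{\dag}(a^{\dag})^*=a^{\dag}bb^{\dag}a$, together with $aa^{\dag}bb^{\dag}=bb^{\dag}$, to obtain the two identities $a^{\dag}bb^{\dag}aa^*bb^{\dag}=a^*bb^{\dag}$ and $bb^{\dag}aa^*bb^{\dag}(a^{\dag})^*=bb^{\dag}a$. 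These are exactly what is needed for the two summands to interchange under involution.

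For necessity, given any $x$ satisfying (\ref{main1}), I would prove $X(x)=x$, so that $x$ is realized by the formula with $v=x$. Writing $X(x)-x$ as the sum of four correction terms $T_1,T_2,T_3,T_4$ (in the order they appear), the argument splits as follows. First, multiplying $axb^*=bx^*a^*$ on the right by $(b^{\dag})^*$ (using $b^*(b^{\dag})^*=b^{\dag}b$) and on the left by $a^{\dag}$ gives $a^{\dag}axb^{\dag}b=a^{\dag}bx^*a^*(b^{\dag})^*$, so $T_1+T_2=0$. Next, the same right-multiplication yields $axb^{\dag}b=bx^*a^*(b^{\dag})^*$, and $T_4$ then contains the factor $d^{\dag}b=a^{\dag}E_bb=a^{\dag}(b-bb^{\dag}b)=0$. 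Finally, expanding $(b^{\dag}ad^{\dag}a)^*$ and substituting $bx^*a^*=axb^*$ exposes the factor $b^*(d^{\dag})^*=b^*E_b(a^{\dag})^*=0$ (using $b^*bb^{\dag}=b^*$), so $T_3=0$.

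The main obstacle is the sufficiency calculation, since it requires keeping four correction terms aligned and exploiting both standing hypotheses at just the right moments; the necessity direction, by contrast, reduces to the two key identities $d^{\dag}b=0$ and $b^*(d^{\dag})^*=0$, which make $T_3$ and $T_4$ vanish for free and leave only the elementary cancellation $T_1+T_2=0$ coming from the equation itself.
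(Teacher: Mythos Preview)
Your proposal is correct and follows the same two-step strategy as the paper: for sufficiency you expand $a\Phi(v)b^*$ and verify it is self-adjoint, and for necessity you show $\Phi(x)=x$ for any solution $x$ by exploiting $d^{\dag}b=0$ (equivalently $b^*(d^{\dag})^*=0$). The only cosmetic difference is in the sufficiency bookkeeping: the paper rewrites $a\Phi(v)b^*$ as $H^{(+,*)}(\tfrac12 avb^*)-H^{(+,*)}(\tfrac12 bv^*d^{\dag}da^*)$, which is manifestly self-adjoint, whereas you carry the term $a^{\dag}bb^{\dag}a$ explicitly and invoke the second hypothesis directly---note, incidentally, that your route shortens if you first observe $b^{\dag}ad^{\dag}a=b^{\dag}d=b^{\dag}E_ba=0$, which kills the fourth term of $\Phi(v)$ outright and leaves $a\Phi(v)b^*=\tfrac12 bb^{\dag}avb^*+\tfrac12 bv^*a^*bb^{\dag}$, visibly self-adjoint without further identities.
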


\begin{proof}
Since $d=E_ba$ by definition, we have $E_bd=d$, hence
$E_bdd^{\dag}=dd^{\dag}$. Taking $*$-operation, we get
$dd^{\dag}=dd^{\dag}E_b$. It follows that

\begin{align}
&d^{\dag}b=d^{\dag}(dd^{\dag}b)=d^{\dag}(dd^{\dag}E_bb)=0,\quad
b^*dd^{\dag}=(dd^{\dag}b)^*=0, \label{2.5}\\
&dd^{\dag}a=dd^{\dag}(bb^{\dag}+E_b)a=dd^{\dag}(E_ba)=dd^{\dag}d=d, \label{2.6}\\
&d^{\dag}a=d^{\dag}(dd^{\dag}a)=d^{\dag}d \label{2.7}.
\end{align}

For any $v\in \mathscr{R}$, let
\begin{equation}\label{2.8}
\Phi(v)=v-\frac{1}{2}a^{\dag}avb^{\dag}b+\frac{1}{2}a^{\dag}bv^*a^*(b^{\dag})^*
-\frac{1}{2}a^{\dag}bv^*(b^{\dag}ad^{\dag}a)^*-\frac{1}{2}d^{\dag}avb^{\dag}b.
\end{equation}

In view of (\ref{2.7}) and the definition of $d$, we have
\begin{align*}
a\Phi(v)b^*&=avb^*-\frac{1}{2}avb^*+\frac{1}{2}bv^*a^*bb^{\dag}-
\frac{1}{2}bv^*(bb^{\dag}ad^{\dag}a)^*-\frac{1}{2}ad^{\dag}avb^*\\
&=\frac{1}{2}avb^*+\frac{1}{2}bv^*a^*bb^{\dag}-
\frac{1}{2}bv^*((1-E_b)ad^{\dag}d)^*-\frac{1}{2}ad^{\dag}dvb^*\\
&=\frac{1}{2}avb^*+\frac{1}{2}bv^*a^*bb^{\dag}-\frac{1}{2}bv^*d^{\dag}da^*
+\frac{1}{2}bv^*d^*-\frac{1}{2}ad^{\dag}dvb^*\\
&=\frac{1}{2}avb^*+\frac{1}{2}bv^*a^*bb^{\dag}+\frac{1}{2}bv^*a^*E_b
-H^{(+,*)}(\frac{1}{2}bv^*d^{\dag}da^*)\\
&=H^{(+,*)}(\frac{1}{2}avb^*)-H^{(+,*)}(\frac{1}{2}bv^*d^{\dag}da^*).
\end{align*}
It follows that $\Phi(v)$ is a solution to Eq.(\ref{main1}).

On the other hand, given any solution $x\in \mathscr{R}$ to Eq.
(\ref{main1}), let $v=x$. Then since $d^{\dag}b=0$, we have
\begin{align*}
\Phi(x)&=x-\frac{1}{2}a^{\dag}axb^{\dag}b+\frac{1}{2}a^{\dag}(bx^*a^*)(b^{\dag})^*
-\frac{1}{2}a^{\dag}(bx^*a^*)(b^{\dag}ad^{\dag})^*-\frac{1}{2}d^{\dag}axb^{\dag}b\\
&=x-\frac{1}{2}a^{\dag}axb^{\dag}b+\frac{1}{2}a^{\dag}axb^*(b^{\dag})^*
-\frac{1}{2}a^{\dag}axb^*(b^{\dag}ad^{\dag})^*-\frac{1}{2}d^{\dag}axb^*(b^{\dag})^*\\
&=x-\frac{1}{2}a^{\dag}ax(b^{\dag}ad^{\dag}b)^*-\frac{1}{2}d^{\dag}bx^*a^*(b^{\dag})^*=x.
\end{align*}
We have proved that the general solution to Eq.(\ref{main1}) has a
form $\Phi(v)$ for some $v\in \mathscr{R}$.
\end{proof}

\begin{theorem}
Let $a,b$ be MP-invertible elements in $\mathscr{R}$ such that
$aa^{\dag}b=b$ and $(a^{\dag}bb^{\dag}a)^*=a^{\dag}bb^{\dag}a$, and
$d=E_ba$. Then
\begin{equation}\label{2.4}
x_0=\frac{1}{2}a^{\dag}c(b^{\dag})^*-\frac{1}{2}a^{\dag}bb^{\dag}c(b^{\dag}ad^{\dag})^*
+\frac{1}{2}d^{\dag}c(b^{\dag})^*
\end{equation}
is a solution to the equation
\begin{equation}\label{main2}
axb^*-bx^*a^*=c
\end{equation}
if and only if
\begin{equation}\label{2.2}
c^*=-c \quad and \quad
H^{(-,*)}((aa^{\dag}+dd^{\dag})cbb^{\dag})=2c.
\end{equation}
\end{theorem}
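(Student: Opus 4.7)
The plan is to verify by direct substitution that $x_0$ from (\ref{2.4}) satisfies (\ref{main2}) exactly when (\ref{2.2}) holds. The ``$c^*=-c$'' half of necessity is immediate: taking the involution of $ax_0b^*-bx_0^*a^*=c$ gives $-(ax_0b^*-bx_0^*a^*)=c^*$, so any solution forces $c^*=-c$. The remaining content of the theorem is the single identity
\[
ax_0b^*-bx_0^*a^*=\tfrac{1}{2}\,H^{(-,*)}\bigl((aa^{\dag}+dd^{\dag})\,c\,bb^{\dag}\bigr),
\]
from which both directions follow: sufficiency reads the right-hand side as $\tfrac{1}{2}\cdot 2c=c$ by (\ref{2.2}), while necessity reads the identity backwards to recover the second condition of (\ref{2.2}) once $c^*=-c$ has been established.

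To prove the displayed identity I would expand $ax_0b^*$ summand by summand, using $(b^{\dag})^*b^*=bb^{\dag}$ and the consequences of $aa^{\dag}b=b$ derived in the previous lemma (namely $aa^{\dag}bb^{\dag}=bb^{\dag}=bb^{\dag}aa^{\dag}$). The first summand collapses to $\tfrac{1}{2}aa^{\dag}c\,bb^{\dag}$. For the third, the preceding lemma gives $d^{\dag}=a^{\dag}E_b$, so $ad^{\dag}=aa^{\dag}E_b=aa^{\dag}-bb^{\dag}$; one also checks $dd^{\dag}=E_baa^{\dag}E_b=aa^{\dag}-bb^{\dag}$, hence $ad^{\dag}=dd^{\dag}$ and the third summand becomes $\tfrac{1}{2}dd^{\dag}c\,bb^{\dag}$. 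For the middle summand, the factor $(b^{\dag}ad^{\dag})^*b^*$ simplifies to $(ad^{\dag})^*bb^{\dag}=E_baa^{\dag}bb^{\dag}=E_bbb^{\dag}=0$, so the middle summand vanishes entirely. Thus $ax_0b^*=\tfrac{1}{2}(aa^{\dag}+dd^{\dag})c\,bb^{\dag}$, and applying the involution yields $bx_0^*a^*=\tfrac{1}{2}bb^{\dag}c^*(aa^{\dag}+dd^{\dag})$. Subtracting, and recalling $H^{(-,*)}(y)=y-y^*$, produces the displayed identity.

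The main obstacle is spotting the two clean algebraic reductions $ad^{\dag}=dd^{\dag}$ and ``middle summand vanishes''. Both hinge on $aa^{\dag}E_b=aa^{\dag}-bb^{\dag}$, which in turn uses $aa^{\dag}b=b$ in an essential way through the symmetric consequence $bb^{\dag}aa^{\dag}=bb^{\dag}$. The second hypothesis $(a^{\dag}bb^{\dag}a)^*=a^{\dag}bb^{\dag}a$ enters only indirectly, through its role in the preceding lemma that guarantees $d^{\dag}=a^{\dag}E_b$. Once these reductions are recognized the whole calculation telescopes to a single $H^{(-,*)}$ expression, and the equivalence with (\ref{2.2}) is transparent.
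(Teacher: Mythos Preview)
Your argument is correct. Both directions follow from the single unconditional identity
\[
ax_0b^*-bx_0^*a^*=\tfrac12\,H^{(-,*)}\bigl((aa^{\dag}+dd^{\dag})cbb^{\dag}\bigr),
\]
and your derivation of it is sound: the reductions $ad^{\dag}=aa^{\dag}E_b=aa^{\dag}-bb^{\dag}=dd^{\dag}$ and $(ad^{\dag})^*bb^{\dag}=E_baa^{\dag}bb^{\dag}=E_bbb^{\dag}=0$ are exactly right, and they yield the compact formula $ax_0b^*=\tfrac12(aa^{\dag}+dd^{\dag})cbb^{\dag}$ without any assumption on $c$.

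The paper's route is somewhat different in organization. For sufficiency it computes $ax_0b^*$ without first recognizing $ad^{\dag}=dd^{\dag}$ or that the middle summand vanishes; instead it writes $bb^{\dag}ad^{\dag}=(1-E_b)ad^{\dag}=ad^{\dag}-dd^{\dag}$, obtains
\[
ax_0b^*=\tfrac12 H^{(+,*)}(ad^{\dag}cbb^{\dag})+\tfrac12 aa^{\dag}cbb^{\dag}+\tfrac12 bb^{\dag}cdd^{\dag},
\]
using $c^*=-c$ along the way, and only after forming $H^{(-,*)}(ax_0b^*)$ does the $H^{(+,*)}$ term cancel. For necessity the paper does \emph{not} use the explicit form of $x_0$ at all: it substitutes $c=ax_0b^*-bx_0^*a^*$ into each of $aa^{\dag}cbb^{\dag}$, $bb^{\dag}caa^{\dag}$, $dd^{\dag}cbb^{\dag}$, $bb^{\dag}cdd^{\dag}$ and adds, an argument that in fact shows any solution forces (\ref{2.2}). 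Your approach is shorter and proves the present theorem more transparently; the paper's necessity argument has the collateral benefit of being solution-independent, which is what feeds into the subsequent characterization of all solutions. One small remark: in your necessity step the clause ``once $c^*=-c$ has been established'' is unnecessary, since your identity already delivers the second condition of (\ref{2.2}) directly.
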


\begin{proof}
If $x_0$ is a solution to equation (\ref{main2}), then obviously
$c^*=-c$ and since $d=E_ba$, in view of (\ref{2.5}) to (\ref{2.7})
we have
\begin{align*}
&aa^{\dag}cbb^{\dag}=aa^{\dag}(ax_0b^*-bx_0^*a^*)bb^{\dag}=ax_0b^*-bx_0^*a^*bb^{\dag},\\
&bb^{\dag}caa^{\dag}=(aa^{\dag}(-c)bb^{\dag})^*=-bx_0^*a^*+bb^{\dag}ax_0b^*,\\
&bb^{\dag}cdd^{\dag}=bb^{\dag}(ax_0b^*-bx_0^*a^*)dd^{\dag}=-bx_0^*a^*dd^{\dag}=-bx_0^*d^*=-bx_0^*a^*E_b,\\
&bb^{\dag}cdd^{\dag}+dd^{\dag}cbb^{\dag}=-bx_0^*a^*E_b+E_bax_0b^*.
\end{align*}

Therefore,
\begin{align*}
&H^{(-,*)}((aa^{\dag}+dd^{\dag})cbb^{\dag})\\
=&aa^{\dag}cbb^{\dag}+bb^{\dag}caa^{\dag}+dd^{\dag}cbb^{\dag}+bb^{\dag}cdd^{\dag}\\
=&ax_0b^*-bx_0^*a^*+(bb^{\dag}+E_b)ax_0b^*-bx_0^*a^*(bb^{\dag}+E_b)\\
=&2(ax_0b^*-bx_0^*a^*)=2c.
\end{align*}

Conversely, suppose that (\ref{2.2}) is satisfied. Let $x_0$ be
defined by (\ref{2.4}). Then as $c^*=-c$ and $d=E_ba$, we have
\begin{align*}
ax_0b^*&=\frac{1}{2}aa^{\dag}cbb^{\dag}-\frac{1}{2}bb^{\dag}c(bb^{\dag}ad^{\dag})^*+\frac{1}{2}ad^{\dag}cbb^{\dag}\\
&=\frac{1}{2}aa^{\dag}cbb^{\dag}-\frac{1}{2}bb^{\dag}c((1-E_b)ad^{\dag})^*+\frac{1}{2}ad^{\dag}cbb^{\dag}\\
&=\frac{1}{2}aa^{\dag}cbb^{\dag}-\frac{1}{2}bb^{\dag}c(ad^{\dag})^*+\frac{1}{2}bb^{\dag}c(dd^{\dag})^*+\frac{1}{2}ad^{\dag}cbb^{\dag}\\
&=\frac{1}{2}H^{(+,*)}(ad^{\dag}cbb^{\dag})+\frac{1}{2}aa^{\dag}cbb^{\dag}+\frac{1}{2}bb^{\dag}cdd^{\dag},
\end{align*}
So
\begin{align*}
H^{(-,*)}(ax_0b^*)&=ax_0b^*-bx_0^*a^*\\
&=\frac{1}{2}aa^{\dag}cbb^{\dag}+\frac{1}{2}bb^{\dag}cdd^{\dag}-(\frac{1}{2}aa^{\dag}cbb^{\dag})^*-(\frac{1}{2}bb^{\dag}cdd^{\dag})^*\\
&=\frac{1}{2}(aa^{\dag}cbb^{\dag}+bb^{\dag}cdd^{\dag}+bb^{\dag}caa^{\dag}+dd^{\dag}cbb^{\dag})\\
&=\frac{1}{2}H^{(-,*)}((aa^{\dag}+dd^{\dag})cbb^{\dag})=c,
\end{align*}
which means that $x_0$ is a solution to Eq.(\ref{main2}).
\end{proof}

Now, we arrive at the most important result of this paper.
\begin{theorem}\label{core}
Let $a,b$ be MP-invertible elements in $\mathscr{R}$ such that
$aa^{\dag}b=b$ and $(a^{\dag}bb^{\dag}a)^*=a^{\dag}bb^{\dag}a$, and
$d=E_ba$. Then Eq.(\ref{main2}) has a solution if and only if
(\ref{2.2}) holds. In which case, the general solution $x$ to
Eq.(\ref{main2}) is of the form $x=x_0+\Phi(v)$, where $v\in
\mathscr{R}$ is arbitrary, and $x_0,\Phi(v)$ are defined by
(\ref{2.4}) and (\ref{2.8}), respectively.
\end{theorem}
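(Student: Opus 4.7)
The strategy is the standard one for linear equations: solve the inhomogeneous equation as a particular solution plus the general solution of the associated homogeneous equation, exploiting the fact that the map $T\colon x\mapsto axb^{*}-bx^{*}a^{*}$ is additive. The two preceding theorems already supply the two ingredients --- a candidate particular solution $x_{0}$ given by (\ref{2.4}), and the full kernel $\{\Phi(v):v\in\mathscr{R}\}$ of $T$ parametrised by (\ref{2.8}). The whole task is to glue them together using additivity of $T$.

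For the ``only if'' direction I would observe that the necessity argument in the preceding theorem nowhere uses the explicit formula (\ref{2.4}) for $x_{0}$; it uses only the single fact that $ax_{0}b^{*}-bx_{0}^{*}a^{*}=c$. Re-running that chain of identities with an arbitrary solution $x$ in place of $x_{0}$ yields both $c^{*}=-c$ and $H^{(-,*)}((aa^{\dag}+dd^{\dag})cbb^{\dag})=2c$. Hence (\ref{2.2}) is necessary for the existence of \emph{any} solution to Eq.(\ref{main2}).

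For the ``if'' direction and the form of the general solution, assume (\ref{2.2}). The preceding theorem then immediately gives that $x_{0}$ as in (\ref{2.4}) is a solution, so Eq.(\ref{main2}) is solvable. If $x\in\mathscr{R}$ is any solution, set $y=x-x_{0}$; additivity of $T$ yields
\[
ayb^{*}-by^{*}a^{*}=(axb^{*}-bx^{*}a^{*})-(ax_{0}b^{*}-bx_{0}^{*}a^{*})=c-c=0,
\]
so $y$ solves the homogeneous equation (\ref{main1}). The earlier theorem on (\ref{main1}) then supplies a $v\in\mathscr{R}$ with $y=\Phi(v)$, whence $x=x_{0}+\Phi(v)$. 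Conversely, for every $v\in\mathscr{R}$ the element $x_{0}+\Phi(v)$ is a solution of Eq.(\ref{main2}), again by additivity of $T$ combined with $a\Phi(v)b^{*}-b\Phi(v)^{*}a^{*}=0$.

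The only real obstacle is a conceptual one: to make explicit that the necessity argument of the preceding theorem is \emph{generic}, i.e.\ works for every solution and not merely for the specific $x_{0}$ constructed there. Once this observation is in place, Theorem \ref{core} is just the familiar affine-space decomposition ``particular solution $+$ kernel'', and no fresh calculation in $\mathscr{R}$ is required.
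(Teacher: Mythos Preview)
Your proposal is correct and matches the paper's approach: the paper gives no explicit proof of Theorem~\ref{core}, presenting it as an immediate consequence of the two preceding theorems, and your argument simply spells out the standard ``particular solution plus kernel'' decomposition that this entails. Your observation that the necessity calculation in the preceding theorem uses only the relation $ax_{0}b^{*}-bx_{0}^{*}a^{*}=c$ and not the explicit formula~(\ref{2.4}) is exactly the point needed to close the logical gap, and it is correct upon inspection of that proof.
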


In fact, if applying the procedure above to the equation
\begin{equation}\label{main3}
axb^*+bx^*a^*=c,
\end{equation}
we can get a result similar to Theorem \ref{core}, which we will
give in the following theorem, leaving the proof to the read as an
exercise.

\begin{theorem}\label{add}
Let $a,b$ be MP-invertible elements in $\mathscr{R}$ such that
$aa^{\dag}b=b$ and $(a^{\dag}bb^{\dag}a)^*=a^{\dag}bb^{\dag}a$, and
$d=E_ba$. Then Eq.(\ref{main3}) has a solution if and only if
$c^*=c$ and $H^{(+,*)}((aa^{\dag}+dd^{\dag})cbb^{\dag})=2c$. In
which case, the general solution $x$ to Eq.(\ref{main3}) is of the
form $x=x_0'+\Phi'(v)$, where $v\in \mathscr{R}$ is arbitrary, and
$x_0$ and $\Phi(v)$ are defined as the following, respectively:
\begin{align}
&x_0'=\frac{1}{2}a^{\dag}c(b^{\dag})^*-\frac{1}{2}a^{\dag}bb^{\dag}c(b^{\dag}ad^{\dag})^*
+\frac{1}{2}d^{\dag}c(b^{\dag})^*\\
&\Phi'(v)=v-\frac{1}{2}a^{\dag}avb^{\dag}b-\frac{1}{2}a^{\dag}bv^*a^*(b^{\dag})^*
+\frac{1}{2}a^{\dag}bv^*(b^{\dag}ad^{\dag}a)^*-\frac{1}{2}d^{\dag}avb^{\dag}b.
\end{align}
\end{theorem}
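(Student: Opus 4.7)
The plan is to mirror the three-part argument leading to Theorem \ref{core}, adjusting signs to reflect the reversed symmetry: where the minus equation forces $c^{*}=-c$ and relies on the self-adjointness of $H^{(+,*)}$, the plus equation forces $c^{*}=c$ and relies on the skew-adjointness of $H^{(-,*)}$. The identities (\ref{2.5})--(\ref{2.7}) and the formulas $d=E_{b}a$, $d^{\dag}=a^{\dag}E_{b}$ remain in force without change.

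First I would handle the homogeneous equation $axb^{*}+bx^{*}a^{*}=0$. Computing $a\Phi'(v)b^{*}$ by the same manipulations used for $a\Phi(v)b^{*}$ (using $aa^{\dag}b=b$, $b^{\dag}bb^{*}=b^{*}$, $d^{\dag}a=d^{\dag}d$, and $d^{*}=a^{*}E_{b}$), the two sign-flipped terms in $\Phi'(v)$ propagate through and the output takes the skew form $H^{(-,*)}(\tfrac{1}{2}avb^{*})+H^{(-,*)}(\tfrac{1}{2}bv^{*}d^{\dag}da^{*})$ in place of the earlier difference of two $H^{(+,*)}$ terms. Since $H^{(-,*)}(y)$ is skew-adjoint, adding this expression to its own adjoint gives $a\Phi'(v)b^{*}+b\Phi'(v)^{*}a^{*}=0$, as required. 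For the converse, given a homogeneous solution $x$ I would set $v=x$ and collapse $\Phi'(x)$ to $x$ by invoking $axb^{*}=-bx^{*}a^{*}$ (the only change from the minus case) together with $d^{\dag}b=0$; the cancellations are structurally identical, with only the middle sign differing.

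For the inhomogeneous part, observe that $x_{0}'$ and $x_{0}$ are syntactically the same expression, so the computation of $ax_{0}'b^{*}$ is the one carried out in the previous proof. The difference appears only when interpreting $-\tfrac{1}{2}bb^{\dag}c(ad^{\dag})^{*}+\tfrac{1}{2}ad^{\dag}cbb^{\dag}$: under $c^{*}=c$ this is $\tfrac{1}{2}H^{(-,*)}(ad^{\dag}cbb^{\dag})$ rather than $\tfrac{1}{2}H^{(+,*)}(ad^{\dag}cbb^{\dag})$. Applying $H^{(+,*)}$ to $ax_{0}'b^{*}$ then annihilates the skew-adjoint $H^{(-,*)}$ term and leaves $\tfrac{1}{2}H^{(+,*)}((aa^{\dag}+dd^{\dag})cbb^{\dag})$, which equals $c$ by hypothesis. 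Necessity is routine: $c^{*}=c$ is forced by taking adjoints of $axb^{*}+bx^{*}a^{*}=c$, and the $H^{(+,*)}$ condition is recovered by the same block of manipulations used in the minus case, with every mixed-adjoint sign adjusted accordingly.

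Combining the two parts yields $x=x_{0}'+\Phi'(v)$ as the general solution. The main obstacle is purely bookkeeping: one must check that every sign in $\Phi'(v)$ and every use of $c^{*}=c$ is consistent so that the $H^{(+,*)}$ and $H^{(-,*)}$ identities telescope in the intended direction. No genuinely new algebraic identity beyond those established for Theorem \ref{core} is needed, which is precisely why the authors safely leave the proof as an exercise.
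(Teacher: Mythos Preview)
Your proposal is correct and is exactly the approach the paper intends: the authors explicitly omit the proof and instruct the reader to rerun the arguments of Theorems 2.2--2.4 with the obvious sign changes, which is precisely the three-part programme (homogeneous $\Phi'$, particular $x_0'$, superposition) you outline. The key sign-tracking observations you isolate---that $a\Phi'(v)b^*$ becomes a sum of $H^{(-,*)}$ terms (hence skew-adjoint), and that under $c^*=c$ the cross term $-\tfrac{1}{2}bb^{\dag}c(ad^{\dag})^*+\tfrac{1}{2}ad^{\dag}cbb^{\dag}$ is $\tfrac{1}{2}H^{(-,*)}(ad^{\dag}cbb^{\dag})$ and so is killed by $H^{(+,*)}$---are exactly the right pivots, and no identity beyond (\ref{2.5})--(\ref{2.7}) is needed.
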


If we replace $a$,$b$, and $c$ by $1$, $a$ and $b$, respectively.
From Theorem \ref{add}, we get the following corollary.
\begin{cor}
Let $a\in \mathscr{R}$ be MP-invertible and $b\in \mathscr{R}$. Then
the equation
\begin{equation}\label{2.10}
xa^*+ax^*=b
\end{equation}
has a solution if and only if
\begin{equation}
b^*=b \quad and \quad E_abE_a=0.
\end{equation}
In which case, the general solution $x$ to Eq.(\ref{2.10}) is of the
form
\begin{equation}
x=\frac{1}{2}(1+E_a)(b(a^{\dag})^*-va^{\dag}a)+v-\frac{1}{2}av^*(a^{\dag})^*,
\end{equation}
where $v \in \mathscr{R}$ is arbitrary.
\end{cor}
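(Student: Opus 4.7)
The plan is to derive this corollary by direct specialization of Theorem \ref{add}, substituting $a \mapsto 1$, $b \mapsto a$, and $c \mapsto b$ into the equation $axb^{*}+bx^{*}a^{*}=c$, which then reads $xa^{*}+ax^{*}=b$, i.e. Eq.(\ref{2.10}). The first task is to check that the hypotheses of Theorem \ref{add} hold under this substitution. Both preconditions $aa^{\dag}b=b$ and $(a^{\dag}bb^{\dag}a)^{*}=a^{\dag}bb^{\dag}a$ become $a=a$ and $(aa^{\dag})^{*}=aa^{\dag}$, respectively, the former trivial (since $1^{\dag}=1$) and the latter one of the Penrose equations. Under this substitution the auxiliary element $d=E_{b}a$ becomes $d=E_{a}\cdot 1=E_{a}$; since $E_{a}$ is a self-adjoint idempotent, the lemma yields $d^{\dag}=1^{\dag}E_{a}=E_{a}$ as well, and of course $dd^{\dag}=E_{a}$.

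Next I would unravel the solvability condition of Theorem \ref{add}. The condition $c^{*}=c$ becomes $b^{*}=b$, and the second condition $H^{(+,*)}((aa^{\dag}+dd^{\dag})cbb^{\dag})=2c$ becomes
\[
H^{(+,*)}\bigl((1+E_{a})\,b\,aa^{\dag}\bigr)=2b.
\]
I expect the main (mildly non-obvious) step to be showing that this last equality is equivalent to $E_{a}bE_{a}=0$. I would prove it by decomposing $b$ with the complementary projections $aa^{\dag}$ and $E_{a}=1-aa^{\dag}$: write $b=aa^{\dag}baa^{\dag}+aa^{\dag}bE_{a}+E_{a}baa^{\dag}+E_{a}bE_{a}$, expand both sides of the putative equality using $b^{*}=b$, and compare the four ``block'' contributions. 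Every term except $E_{a}bE_{a}$ cancels, leaving the stated equivalence.

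Finally, I would translate the explicit formulas for $x_{0}'$ and $\Phi'(v)$ from Theorem \ref{add} via the same substitution. The key simplification to exploit is $a^{\dag}E_{a}=a^{\dag}-a^{\dag}aa^{\dag}=0$, which kills the middle term in $x_{0}'$ and the fourth term in $\Phi'(v)$. What remains gives
\[
x_{0}'=\tfrac{1}{2}(1+E_{a})\,b(a^{\dag})^{*},\qquad
\Phi'(v)=v-\tfrac{1}{2}(1+E_{a})\,va^{\dag}a-\tfrac{1}{2}av^{*}(a^{\dag})^{*},
\]
and summing $x=x_{0}'+\Phi'(v)$ and factoring $\tfrac{1}{2}(1+E_{a})$ from the appropriate pieces produces precisely the stated expression. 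No genuine obstacle arises beyond careful bookkeeping of the factors $aa^{\dag}$, $E_{a}$, and the identity $a^{\dag}E_{a}=0$, so the whole argument reduces to substitution plus one short projection-decomposition lemma.
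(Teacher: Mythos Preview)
Your proposal is correct and follows exactly the route the paper indicates: the paper simply says ``replace $a$, $b$, $c$ by $1$, $a$, $b$ in Theorem~\ref{add}'' and states the corollary without further argument. You have filled in precisely the details (the projection decomposition showing the second solvability condition reduces to $E_{a}bE_{a}=0$, and the simplification via $a^{\dag}E_{a}=0$) that the paper leaves implicit.
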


Similarly, for the equation
\begin{equation}\label{2.13}
a^*x+x^*a=b,
\end{equation}
we also have the following corollary.

\begin{cor}
Let $a\in \mathscr{R}$ be MP-invertible and $b\in \mathscr{R}$. Then
Eq.(\ref{2.13}) has a solution if and only if
\begin{equation}
b^*=b \quad and \quad F_abF_a=0.
\end{equation}
In which case, the general solution $x$ to Eq.(\ref{2.13}) is of the
form
\begin{equation}
x=\frac{1}{2}((a^{\dag})^*b-aa^{\dag}w)(1+F_a)+w-\frac{1}{2}(a^{\dag})^*w^*a,
\end{equation}
where $w \in \mathscr{R}$ is arbitrary.
\end{cor}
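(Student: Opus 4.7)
The plan is to reduce Eq.(\ref{2.13}) to an instance of Eq.(\ref{main3}) by an involution trick and then invoke Theorem \ref{add}. Setting $y=x^{*}$, the equation $a^{*}x+x^{*}a=b$ becomes $ya+a^{*}y^{*}=b$, which has exactly the shape $AyB^{*}+By^{*}A^{*}=C$ with $A=1$, $B=a^{*}$, $C=b$. Under this substitution, the two hypotheses of Theorem \ref{add} are immediate: $AA^{\dag}B=B$ holds because $A=1$, and $A^{\dag}BB^{\dag}A=a^{*}(a^{*})^{\dag}=(a^{\dag}a)^{*}=a^{\dag}a$ is visibly Hermitian.

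Next I would unpack the derived quantities. One obtains $d=E_{B}A=1-a^{*}(a^{*})^{\dag}=1-a^{\dag}a=F_{a}$, and since $F_{a}$ is a Hermitian idempotent, $d^{\dag}=F_{a}$. The existence condition from Theorem \ref{add} then reads $b^{*}=b$ together with $H^{(+,*)}\bigl((1+F_{a})\,b\,a^{\dag}a\bigr)=2b$. To turn the second condition into the cleaner $F_{a}bF_{a}=0$, I would decompose $b=(a^{\dag}a+F_{a})b(a^{\dag}a+F_{a})$ using $1=a^{\dag}a+F_{a}$ together with the orthogonality $F_{a}\cdot a^{\dag}a=0=a^{\dag}a\cdot F_{a}$; a short expansion then yields $H^{(+,*)}\bigl((1+F_{a})\,b\,a^{\dag}a\bigr)=2b-2F_{a}bF_{a}$, which gives the desired equivalence.

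Third, I would substitute $A=1$, $B=a^{*}$, $C=b$ into the formulas for $x_{0}'$ and $\Phi'(v)$ in Theorem \ref{add}. The crucial simplification is the identity $F_{a}a^{\dag}=0$, which after applying $*$ becomes $(a^{\dag})^{*}F_{a}=0$; this annihilates the terms of both $x_{0}'$ and $\Phi'(v)$ that contain the factor $(B^{\dag}AD^{\dag})^{*}$. The particular solution collapses to $y_{0}=\tfrac{1}{2}(1+F_{a})ba^{\dag}$, and $\Phi'(v)$ reduces to $v-\tfrac{1}{2}vaa^{\dag}-\tfrac{1}{2}a^{*}v^{*}a^{\dag}-\tfrac{1}{2}F_{a}vaa^{\dag}$. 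Converting back via $x=y^{*}$ and renaming the free parameter $w=v^{*}$---still arbitrary in $\mathscr{R}$ because $*$ is a bijection---one finds $x=\tfrac{1}{2}(a^{\dag})^{*}b(1+F_{a})+w-\tfrac{1}{2}aa^{\dag}w(1+F_{a})-\tfrac{1}{2}(a^{\dag})^{*}w^{*}a$, which regroups into the stated closed form after pulling out $(1+F_{a})$ on the right.

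The main obstacle is not conceptual but bookkeeping: keeping careful track of how the various $*$'s distribute when $B=a^{*}$, and making sure the hypothesis checks, the simplification of the existence condition, and the collapse of the solution formulas each invoke the right combination of $F_{a}a^{\dag}=0$ with the Hermitianness of $aa^{\dag}$ and $a^{\dag}a$. Packaging the small identities relating $F_{a}$, $a^{\dag}$, and $(a^{*})^{\dag}=(a^{\dag})^{*}$ into a short preliminary observation would make the entire substitution routine, and would make the final verification a matter of collecting like terms.
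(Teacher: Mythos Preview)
Your proposal is correct and follows exactly the route the paper has in mind: the paper gives no explicit proof for this corollary, merely writing ``Similarly'' after deriving the preceding corollary from Theorem~\ref{add} via the substitution $a\mapsto 1$, $b\mapsto a$, $c\mapsto b$. Your involution trick $y=x^{*}$ together with the substitution $A=1$, $B=a^{*}$, $C=b$ in Theorem~\ref{add} is precisely the analogous reduction, and your bookkeeping (in particular the use of $(a^{\dag})^{*}F_{a}=0$ and the identity $H^{(+,*)}((1+F_{a})ba^{\dag}a)=2b-2F_{a}bF_{a}$) is accurate.
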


\section{The embedding: from rings to rectangular matrices and adjointable operators between Hilbert
$C^*$-modules}\label{embedding}

In this section, we will use the method described in \cite{koliha}
to extend the results for ring $\mathscr{R}$ with involution to the
rectangular matrices over $\mathscr{R}$ and adjointable operators
between Hilbert $C^*$-modules.

The results of the preceding section apply to square matrices of the
same order $n$ over a ring $\mathscr{R}$ as these form a ring
$\mathscr{R}^{n\times n}$ under the usual matrix operations and with
the involution defined as involute transpose. Suppose that $A\in
\mathscr{R}^{m\times n}$, $B \in \mathscr{R}^{p\times m}$ and $C \in
\mathscr{R}^{m\times m}$, consider the equation
\begin{equation}\label{matrix}
AXB^*-BX^*A^*=C \quad \text{for} \quad  X \in \mathscr{R}^{n\times
p}.
\end{equation}
The embedding of Eq.(\ref{matrix}) to a ring is achieved by defining
$a$, $b$ and $c$ in the ring $\mathscr{R}^{k\times k}$, where
$k=m+n+p$, by
\begin{equation}
a=\begin{bmatrix}
0 & A & 0\\
0 & 0 & 0\\
0 & 0 & 0
\end{bmatrix},
\quad b=
\begin{bmatrix}
0 & 0 & B\\
0 & 0 & 0\\
0 & 0 & 0
\end{bmatrix}
\quad \text{and} \quad c=
\begin{bmatrix}
C & 0 & 0\\
0 & 0 & 0\\
0 & 0 & 0
\end{bmatrix}.
\end{equation}

Now consider the equation
\begin{equation}
axb^*-bx^*a^*=c, \quad \text{for} \quad x \in \mathscr{R}^{k\times
k},
\end{equation}
which can also be expressed in the following detailed matrix form,
\begin{multline}\label{detailed}
\begin{bmatrix}
0 & A & 0\\
0 & 0 & 0\\
0 & 0 & 0
\end{bmatrix}
\begin{bmatrix}
X_{11} & X_{12} & X_{13}\\
X_{21} & X_{22} & X_{23}\\
X_{31} & X_{32} & X_{33}
\end{bmatrix}
\begin{bmatrix}
0 & 0 & 0\\
0 & 0 & 0\\
B^* & 0 & 0
\end{bmatrix}
-
\begin{bmatrix}
0 & 0 & B\\
0 & 0 & 0\\
0 & 0 & 0
\end{bmatrix}
\begin{bmatrix}
X_{11}^* & X_{21}^* & X_{31}^*\\
X_{12}^* & X_{22}^* & X_{32}^*\\
X_{13}^* & X_{23}^* & X_{33}^*
\end{bmatrix}
\begin{bmatrix}
0 & 0 & 0\\
A^* & 0 & 0\\
0 & 0 & 0
\end{bmatrix}\\
=
\begin{bmatrix}
C & 0 & 0\\
0 & 0 & 0\\
0 & 0 & 0
\end{bmatrix}.
\end{multline}

Straightforward calculation shows that Eq.(\ref{detailed}) is equal
to the following equation
\begin{equation}
\begin{bmatrix}
AX_{23}B^*-BX_{23}^*A^* & 0 & 0\\
0 & 0 & 0\\
0 & 0 & 0
\end{bmatrix}
=
\begin{bmatrix}
C & 0 & 0\\
0 & 0 & 0\\
0 & 0 & 0
\end{bmatrix}.
\end{equation}
So we have the following lemma.

\begin{lemma}
Let $A\in \mathscr{R}^{m\times n}$, $B \in \mathscr{R}^{p\times m}$
and $C \in \mathscr{R}^{m\times m}$, let $a$, $b$ and $c$ be defined
by (3.2), and let $k=m+n+p$. Then Eq.(\ref{matrix}) has a solution
$X\in \mathscr{R}^{n \times p}$ if and only if Eq.(3.3) has a
solution $x \in \mathscr{R}^{k \times k}$ with $X_{23}=X$. In this
case there is a one-to-one correspondence between the solution $X$
of Eq.(\ref{matrix}) and the solution $x$ of Eq.(3.3) of the form
\begin{equation}
x=
\begin{bmatrix}
0 & 0 & 0\\
0 & 0 & X\\
0 & 0 & 0
\end{bmatrix}.
\end{equation}
\end{lemma}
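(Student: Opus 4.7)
The plan is to verify both implications by reducing each to the block computation already displayed in the passage preceding the lemma, which shows that for any $x=(X_{ij})_{i,j=1}^{3}\in\mathscr{R}^{k\times k}$ one has
\begin{equation*}
axb^*-bx^*a^*=
\begin{bmatrix}
AX_{23}B^*-BX_{23}^*A^* & 0 & 0\\
0 & 0 & 0\\
0 & 0 & 0
\end{bmatrix}.
\end{equation*}
This identity is the key observation: because $a$ has its only nonzero block in position $(1,2)$ and $b^*$ only in position $(3,1)$, the product $axb^*$ picks out exactly the $(2,3)$ block of $x$ and places it in the $(1,1)$ slot; analogously for $bx^*a^*$. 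Every other block of $x$ is annihilated.

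For the forward direction, given a solution $X\in\mathscr{R}^{n\times p}$ of Eq.(\ref{matrix}), I would define $x$ by the sparse form displayed in (3.5), i.e.\ with $X_{23}=X$ and all other blocks zero. Plugging this $x$ into the boxed identity immediately yields the block matrix with top-left entry $AXB^*-BX^*A^*=C$ and zeros elsewhere, which is precisely $c$. So $x$ solves Eq.(3.3).

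For the converse, suppose $x=(X_{ij})\in\mathscr{R}^{k\times k}$ solves Eq.(3.3). Comparing the boxed expression for $axb^*-bx^*a^*$ with $c$, whose only nonzero block is $C$ in position $(1,1)$, forces $AX_{23}B^*-BX_{23}^*A^*=C$. Hence $X:=X_{23}$ solves Eq.(\ref{matrix}). Moreover, since the other blocks $X_{ij}$ with $(i,j)\neq(2,3)$ are completely unconstrained by Eq.(3.3), every solution of Eq.(3.3) has some companion solution of the sparse form (3.5) with the same $X_{23}$, obtained by zeroing out all other blocks.

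The one-to-one correspondence then follows formally: the map $X\mapsto x$ sending a solution of Eq.(\ref{matrix}) to the sparse block matrix (3.5) is clearly injective (different $X$ produce different $(2,3)$ blocks) and surjective onto the set of solutions of Eq.(3.3) of the displayed form (by the converse). The main obstacle is purely bookkeeping, namely carefully executing the $3\times 3$ block multiplication to confirm the boxed identity above; once that routine calculation is performed, both directions and the bijection are immediate.
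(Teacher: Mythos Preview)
Your proposal is correct and follows exactly the paper's approach: the paper performs the block computation (displayed just before the lemma) showing that $axb^*-bx^*a^*$ has $AX_{23}B^*-BX_{23}^*A^*$ as its only nonzero block, and then simply states the lemma as an immediate consequence without further proof. Your write-up spells out explicitly the two implications and the bijection that the paper leaves implicit, but the underlying argument is identical.
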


Note that, if $A$ and $B$ have MP-inverses $A^{\dag}\in
\mathscr{R}^{n\times m}$ and $B^{\dag}\in \mathscr{R}^{m \times p}$,
respectively, then $a$ and $b$ are also MP-invertible with
\begin{equation}
a^{\dag}=
\begin{bmatrix}
0 & 0 & 0\\
A^{\dag} & 0 & 0\\
0 & 0 & 0
\end{bmatrix}
\quad
b^{\dag}=
\begin{bmatrix}
0 & 0 & 0\\
0 & 0 & 0\\
B^{\dag} & 0 & 0
\end{bmatrix}.
\end{equation}
Then it is a work of direct calculation to get the following
theorem, which was obtained by \cite{yuan} in the case of real
matrices.

\begin{theorem}
Let $\mathscr{R}$ be a ring with involution, let $A\in
\mathscr{R}^{m\times n}$, $B \in \mathscr{R}^{p\times m}$ be
MP-invertible and $AA^{\dag}B=B$,
$(A^{\dag}BB^{\dag}A)^*=A^{\dag}BB^{\dag}A$, $D=E_BA$. Then
\begin{equation}
X_0=\frac{1}{2}A^{\dag}C(B^{\dag})^*-\frac{1}{2}A^{\dag}BB^{\dag}C(B^{\dag}AD^{\dag})^*
+\frac{1}{2}D^{\dag}C(B^{\dag})^*
\end{equation}
is a solution to Eq.(3.1)
if and only if
\begin{equation}
C^*=-C \quad \text{and} \quad
H^{(-,*)}((AA^{\dag}+DD^{\dag})CBB^{\dag})=2C.
\end{equation}
In which case, the general solution X to Eq.(3.1) is of the form
$X=X_0+\Phi(V)$, where $V \in \mathscr{R}^{n \times p}$ is
arbitrary, and $\Phi(V)$ is defined by (2.6).
\end{theorem}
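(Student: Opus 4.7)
The plan is to derive this theorem as a corollary of Theorem \ref{core} applied to the ring $\mathscr{R}^{k\times k}$ with $k=m+n+p$, via the embedding set up in Section \ref{embedding}. By the lemma preceding the theorem, solutions $X\in\mathscr{R}^{n\times p}$ to Eq.(3.1) are in one-to-one correspondence with solutions $x\in\mathscr{R}^{k\times k}$ of the embedded equation $axb^*-bx^*a^* = c$, where $a,b,c$ are the block matrices defined in (3.2) and the matching solution $x$ has $X$ sitting in the $(2,3)$ block and zeros elsewhere. So it suffices to run Theorem \ref{core} on $a,b,c$ and then read off the $(2,3)$ block of the resulting general solution.

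First I would verify that the hypotheses of Theorem \ref{core} hold for $a,b$. Using the stated formulas for $a^{\dag}, b^{\dag}$, direct block multiplication gives $aa^{\dag}=\mathrm{diag}(AA^{\dag},0,0)$ and $bb^{\dag}=\mathrm{diag}(BB^{\dag},0,0)$, so the condition $aa^{\dag}b=b$ is immediate from $AA^{\dag}B=B$, and $a^{\dag}bb^{\dag}a$ is the block matrix with $A^{\dag}BB^{\dag}A$ in position $(2,2)$ and zeros elsewhere, so its self-adjointness follows directly from $(A^{\dag}BB^{\dag}A)^*=A^{\dag}BB^{\dag}A$. Next, I would compute
\begin{equation*}
E_b=\mathrm{diag}(E_B,I,I),\qquad d=E_ba=\begin{bmatrix}0 & D & 0\\0 & 0 & 0\\0 & 0 & 0\end{bmatrix},\qquad d^{\dag}=a^{\dag}E_b=\begin{bmatrix}0 & 0 & 0\\ D^{\dag} & 0 & 0\\0 & 0 & 0\end{bmatrix},
\end{equation*}
which confirms the ring-level $d,d^{\dag}$ arise by embedding the matrix-level $D,D^{\dag}$. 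At this point the preceding lemma (on $d^\dagger = a^\dagger E_b$) applies at the ring level, letting us invoke Theorem \ref{core}.

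Second, I would translate the solvability criterion $c^*=-c$ and $H^{(-,*)}((aa^{\dag}+dd^{\dag})cbb^{\dag})=2c$ into matrix form. Because $c=\mathrm{diag}(C,0,0)$, the first condition is simply $C^*=-C$. For the second, $aa^{\dag}+dd^{\dag}=\mathrm{diag}(AA^{\dag}+DD^{\dag},0,0)$ (since $DD^{\dag}$ lands in the $(1,1)$ block of $dd^{\dag}$), so $(aa^{\dag}+dd^{\dag})cbb^{\dag}$ has single nonzero entry $(AA^{\dag}+DD^{\dag})CBB^{\dag}$ in position $(1,1)$, and the ring-level $H^{(-,*)}$ identity reduces precisely to $H^{(-,*)}((AA^{\dag}+DD^{\dag})CBB^{\dag})=2C$.

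Third, to obtain the formula for the general solution, I would substitute the block expressions for $a^{\dag},b^{\dag},d^{\dag},c$ into (\ref{2.4}) and show that the resulting $x_0$ has exactly $X_0$ in its $(2,3)$ block and zeros elsewhere; by Lemma 3.1, this guarantees $X_0$ solves Eq.(3.1). For the homogeneous piece, I would parametrize $v$ by a matrix $V\in\mathscr{R}^{n\times p}$ placed in the $(2,3)$ block (and zeros elsewhere), substitute into (\ref{2.8}), and verify that the $(2,3)$ block of $\Phi(v)$ reproduces the given $\Phi(V)$. The main obstacle is purely bookkeeping: ensuring that the block-matrix reduction of each of the five terms in $\Phi(v)$, and of the three terms in $x_0$, picks out the $(2,3)$ entry correctly and that no extra-block contributions appear; one must check in particular that when $v$ is restricted to the $(2,3)$-embedded form, the ring formula for $\Phi(v)$ never produces a nonzero entry outside that block, so that the one-to-one correspondence of Lemma 3.1 is compatible with the parametrization. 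Once these block identifications are made, the conclusion is immediate from Theorem \ref{core}.
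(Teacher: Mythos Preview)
Your proposal is correct and follows exactly the approach the paper indicates: the paper simply states that ``it is a work of direct calculation to get the following theorem'' after setting up the embedding (3.2) and Lemma 3.1, and you have spelled out that calculation in detail, verifying the hypotheses of Theorem \ref{core} for $a,b,c\in\mathscr{R}^{k\times k}$ and reading off the $(2,3)$ block. The only point worth making explicit is that for a \emph{general} $v\in\mathscr{R}^{k\times k}$ the $(2,3)$ block of $\Phi(v)$ depends only on $v_{23}$ (since $a^{\dag}a=\mathrm{diag}(0,A^{\dag}A,0)$, $b^{\dag}b=\mathrm{diag}(0,0,B^{\dag}B)$, etc.), which is what guarantees that restricting $v$ to the $(2,3)$-embedded form loses no solutions.
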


Now we turn to the case of adjointable operator between Hilbert
$C^*$-modules. Let $H_1$, $H_2$ and $H_3$ be Hilbert $C^*$-modules,
let $A\in \mathscr{L}(H_3,H_2)$, $B \in \mathscr{L}(H_1,H_2)$ and $C
\in \mathscr{L}(H_2)$ be adjointable operators. The solvability of
the equation
\begin{equation}
AXB^*-BX^*A^*=C \quad \text{for} \quad  X \in \mathscr{L}(H_1,H_3)
\end{equation}
was studied in \cite{xu}. Now we let $H=H_1\oplus H_2\oplus H_3$,
\begin{align}
&\widetilde{A}=
\begin{bmatrix}
0 & 0 & 0\\
0 & 0 & A\\
0 & 0 & 0
\end{bmatrix}\in \mathscr{L}(H),\\
&\widetilde{B}=
\begin{bmatrix}
0 & 0 & 0\\
B & 0 & 0\\
0 & 0 & 0
\end{bmatrix}\in \mathscr{L}(H), \quad \text{and}\\
&\widetilde{C}=
\begin{bmatrix}
0 & 0 & 0\\
0 & C & 0\\
0 & 0 & 0
\end{bmatrix}\in \mathscr{L}(H).
\end{align}
Consider the equation
\begin{equation}
\widetilde{A}\widetilde{X}\widetilde{B}^*-\widetilde{B}\widetilde{X}^*\widetilde{A}^*=\widetilde{C},
\quad \text{for} \widetilde{X}\in \mathscr{L}(H,H).
\end{equation}

Similar to Lemma 3.1, we have
\begin{lemma}
Let $H_1$, $H_2$ and $H_3$ be Hilbert $C^*$-modules, $A\in
\mathscr{L}(H_3,H_2)$, $B \in \mathscr{L}(H_1,H_2)$ and $C \in
\mathscr{L}(H_2)$ be adjointable operators. Then Eq.(3.10) has a
solution $X \in \mathscr{L}(H_1,H_3)$ if and only if Eq.(3.14) has a
solution $\widetilde{X}\in \mathscr{L}(H)$ with
$\widetilde{X}_{31}=X \in \mathscr{L}(H_1,H_3)$. In this case there
is a one-to-one correspondence between the solution $\widetilde{X}$
of Eq.(3.14) and the solution $X$ of Eq.(3.10) of the form
\begin{equation}
\widetilde{X}=
\begin{bmatrix}
0 & 0 & 0\\
0 & 0 & 0\\
X & 0 & 0
\end{bmatrix}.
\end{equation}
\end{lemma}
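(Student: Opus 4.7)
The plan is to mimic the matrix embedding argument of Lemma~3.1, replacing $3\times 3$ block matrices over $\mathscr{R}$ by $3\times 3$ operator matrices on $H = H_1 \oplus H_2 \oplus H_3$. The key observation should be that $\widetilde{A}$, $\widetilde{B}$, and $\widetilde{C}$ are chosen with very sparse support, so the expression $\widetilde{A}\widetilde{X}\widetilde{B}^* - \widetilde{B}\widetilde{X}^*\widetilde{A}^*$ will depend on only a single block of $\widetilde{X}$, namely $\widetilde{X}_{31} \in \mathscr{L}(H_1, H_3)$.

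First I would write $\widetilde{X} = [\widetilde{X}_{ij}]_{i,j=1}^{3}$ with each $\widetilde{X}_{ij} \in \mathscr{L}(H_j, H_i)$. Since the only nonzero block of $\widetilde{A}$ is $A$ in position $(2,3)$ and the only nonzero block of $\widetilde{B}^*$ is $B^*$ in position $(1,2)$, a direct two-step block multiplication shows that $\widetilde{A}\widetilde{X}\widetilde{B}^*$ has all blocks zero except the $(2,2)$-entry, which equals $A\widetilde{X}_{31}B^*$. Performing the analogous calculation for $\widetilde{B}\widetilde{X}^*\widetilde{A}^*$, and recalling that $(\widetilde{X}^*)_{ij} = (\widetilde{X}_{ji})^*$, likewise gives that only its $(2,2)$-block is nonzero, equal to $B\widetilde{X}_{31}^*A^*$. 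Hence Eq.~(3.14) collapses to the single block identity
\begin{equation*}
A\widetilde{X}_{31}B^* - B\widetilde{X}_{31}^*A^* = C,
\end{equation*}
with every other block of $\widetilde{A}\widetilde{X}\widetilde{B}^* - \widetilde{B}\widetilde{X}^*\widetilde{A}^*$ and of $\widetilde{C}$ matching (both zero) automatically.

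The two-way equivalence then reads off directly. If $X \in \mathscr{L}(H_1, H_3)$ solves Eq.~(3.10), I build $\widetilde{X}$ as the operator matrix with $X$ in position $(3,1)$ and zeros elsewhere; the calculation above gives $\widetilde{X}_{31} = X$ and shows that $\widetilde{X}$ solves Eq.~(3.14). Conversely, any $\widetilde{X} \in \mathscr{L}(H)$ solving Eq.~(3.14) yields a solution $X := \widetilde{X}_{31}$ of Eq.~(3.10). The one-to-one correspondence claimed in the lemma is then precisely the map $X \mapsto \widetilde{X}$ above and its inverse, restricted to those $\widetilde{X}$ of the prescribed sparse form. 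I do not anticipate any real obstacle: the argument is essentially sparse block-matrix bookkeeping parallel to Lemma~3.1, and the only delicate point is tracking adjoints so that $(\widetilde{X}^*)_{ij}$ lands in the correct space $\mathscr{L}(H_j, H_i)$ for the compositions to be well defined.
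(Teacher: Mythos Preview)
Your proposal is correct and follows precisely the approach the paper intends: the paper does not give an explicit proof but merely states ``Similar to Lemma~3.1,'' and your block-matrix computation is exactly that analogue, isolating the single relevant entry $\widetilde{X}_{31}$. There is nothing to add.
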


Note that in Hilbert $C^*$-modules, if $A\in \mathscr{L}(H_3,H_2)$
is MP-invertible, then $R(B)\subseteq R(A)$ is equivalent to
$AA^{\dag}B=B$, and that for any operator $A$, $A$ is MP-invertible
if and only if $R(A)$ is closed. From Theorem 2.4, we reobtain the
following theorem, which was first given in \cite{xu}.
\begin{theorem}
Let $H_1$, $H_2$ and $H_3$ be Hilbert $C^*$-modules, $A\in
\mathscr{L}(H_3,H_2)$, $B \in \mathscr{L}(H_1,H_2)$ and $C \in
\mathscr{L}(H_2)$ be adjointable operators such that $R(B)\subseteq
R(A)$ and $D=E_BA$ such that $R(D)$ is also closed. Then Eq.(3.10)
has a solution if and only if
\begin{equation}
C^*=-C \quad \text{and} \quad
H^{(-,*)}((AA^{\dag}+DD^{\dag})CBB^{\dag})=2C.
\end{equation}
In which case, the general solution X to Eq.(3.10) is of the form
\begin{align*}
X&=X_0+V-\frac{1}{2}A^{\dag}AVB^{\dag}B+\frac{1}{2}A^{\dag}BV^*A^*(B^{\dag})^*\\
&-\frac{1}{2}A^{\dag}BV^*(B^{\dag}AD^{\dag}A)^*-\frac{1}{2}D^{\dag}AVB^{\dag}B,
\end{align*}
where $V \in \mathscr{L}(H_1,H_3)$ is arbitrary, and $X_0$ is a
particular solution to Eq.(3.10) defined by
\begin{equation}
X_0=\frac{1}{2}A^{\dag}C(B^{\dag})^*-\frac{1}{2}A^{\dag}BB^{\dag}C(B^{\dag}AD^{\dag})^*
+\frac{1}{2}D^{\dag}C(B^{\dag})^*.
\end{equation}
\end{theorem}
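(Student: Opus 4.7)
The strategy is to apply the ring-theoretic Theorem \ref{core} to the involutive ring $\mathscr{R}=\mathscr{L}(H)$ with $H=H_1\oplus H_2\oplus H_3$, and then translate the conclusion back to $\mathscr{L}(H_1,H_3)$ using Lemma 3.3. Concretely, I would first show that Eq.(3.10) is solvable for some $X\in\mathscr{L}(H_1,H_3)$ precisely when the embedded equation (3.14) is solvable for some $\widetilde{X}\in\mathscr{L}(H)$, which is exactly the content of Lemma 3.3 applied with $\widetilde{X}_{31}=X$.

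Next I would verify that the triple $(\widetilde{A},\widetilde{B},\widetilde{C})$ satisfies all hypotheses of Theorem \ref{core}. Since $R(A)$ is closed (equivalently $A^{\dag}$ exists), a direct block computation gives
\[
\widetilde{A}^{\dag}=\begin{bmatrix}0&0&0\\0&0&0\\0&A^{\dag}&0\end{bmatrix},\qquad
\widetilde{B}^{\dag}=\begin{bmatrix}0&B^{\dag}&0\\0&0&0\\0&0&0\end{bmatrix},
\]
so that both $\widetilde{A}$ and $\widetilde{B}$ are MP-invertible in $\mathscr{L}(H)$. Multiplying blockwise then shows $\widetilde{A}\widetilde{A}^{\dag}\widetilde{B}=\widetilde{B}$ is equivalent to $AA^{\dag}B=B$, which in the Hilbert $C^*$-module setting is the standard reformulation of $R(B)\subseteq R(A)$. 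A further block computation identifies $\widetilde{A}^{\dag}\widetilde{B}\widetilde{B}^{\dag}\widetilde{A}$ as the embedding of $A^{\dag}BB^{\dag}A$, so self-adjointness transfers automatically. Finally, the hypothesis that $R(D)$ is closed means $D=E_BA$ is MP-invertible in $\mathscr{L}(H_3,H_2)$, and I would check that $\widetilde{D}=E_{\widetilde{B}}\widetilde{A}$ is the embedding of $D$ (in the appropriate corner) and therefore inherits MP-invertibility in $\mathscr{L}(H)$, with $\widetilde{D}^{\dag}$ the expected block matrix involving $D^{\dag}$.

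Having verified the hypotheses, I would apply Theorem \ref{core} to conclude that (3.14) is solvable iff $\widetilde{C}^*=-\widetilde{C}$ and $H^{(-,*)}((\widetilde{A}\widetilde{A}^{\dag}+\widetilde{D}\widetilde{D}^{\dag})\widetilde{C}\widetilde{B}\widetilde{B}^{\dag})=2\widetilde{C}$. Computing each block and extracting the nontrivial $(2,2)$-entry, these collapse precisely to $C^*=-C$ and $H^{(-,*)}((AA^{\dag}+DD^{\dag})CBB^{\dag})=2C$. For the form of the general solution, I would substitute the expressions for $\widetilde{A}^{\dag}$, $\widetilde{B}^{\dag}$, $\widetilde{D}^{\dag}$, $\widetilde{C}$ into the formulas (\ref{2.4}) and (\ref{2.8}) defining $x_0$ and $\Phi(v)$, with the free parameter $\widetilde{V}$ restricted by Lemma 3.3 to have the block form with $\widetilde{V}_{31}=V\in\mathscr{L}(H_1,H_3)$ (the other blocks are absorbed by the projections $\widetilde{A}^{\dag}\widetilde{A}$, $\widetilde{B}^{\dag}\widetilde{B}$, etc.), and read off the $(3,1)$-entry to obtain $X_0$ and the claimed expression for $X-X_0$.

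The main obstacle I foresee is purely bookkeeping: keeping track of which block of each $3\times 3$ operator matrix is nonzero and ensuring that the projections $E_{\widetilde{A}}$, $E_{\widetilde{B}}$, $F_{\widetilde{A}}$, $F_{\widetilde{B}}$ truncate the free parameter $\widetilde{V}$ in just the right way so that all choices of the off-block entries of $\widetilde{V}$ give the same contribution to $\widetilde{X}_{31}$, thereby producing a bijective correspondence between solutions of (3.10) and solutions of (3.14). Once this indexing is done carefully, no further ring-theoretic work is required and the theorem follows.
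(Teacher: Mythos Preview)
Your proposal is correct and follows exactly the approach the paper takes: the paper itself offers no detailed argument for this theorem beyond noting that $R(B)\subseteq R(A)$ is equivalent to $AA^{\dag}B=B$ and that closed range is equivalent to MP-invertibility, and then asserting that Theorem~\ref{core} yields the result via the embedding and Lemma~3.3. Your outline is in fact more thorough than the paper's own treatment, and the bookkeeping obstacle you anticipate (checking that only the $(3,1)$-block of $\widetilde{V}$ contributes to $\widetilde{X}_{31}$) is the one nontrivial verification left implicit there.
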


\bibliographystyle{amsplain}

\end{document}